\documentclass[11pt]{amsart}

\usepackage[utf8]{inputenc}
\usepackage[T1]{fontenc,url}

\usepackage[left=3.3cm,right=3.3cm,bottom=2.3cm,top=2.3cm]{geometry}

\usepackage{amsmath,amsthm,amsfonts,amssymb}
\usepackage{mathrsfs}
\usepackage[shortlabels]{enumitem}
\usepackage{cite}
\usepackage{mathtools}
\usepackage{color}
\definecolor{shadecolor}{gray}{0.875}
\definecolor{col}{RGB}{42, 95, 151}
\usepackage[colorlinks=true, linkcolor=col, citecolor=col, filecolor=col, menucolor=col, urlcolor=col]{hyperref}

\numberwithin{equation}{section}

\theoremstyle{plain}
\newtheorem{theorem}{Theorem}[section]
\newtheorem*{lemma*}{Lemma}
\newtheorem{lemma}[theorem]{Lemma}
\newtheorem*{theorem*}{Theorem}
\newtheorem{proposition}[theorem]{Proposition}
\newtheorem*{proposition*}{Proposition}
\newtheorem{corollary}[theorem]{Corollary}
\newtheorem*{corollary*}{Corollary}

\theoremstyle{definition}
\newtheorem{remark}[theorem]{Remark}
\newtheorem*{remark*}{Remark}
\newtheorem*{definition*}{Definition}

\newtheorem*{example*}{Example}

\newtheorem{example}[theorem]{Example}

\newtheorem*{question*}{Question}

\def\rank{\operatorname{rank}}

\def\c1{\operatorname{c_1}}
\def\c2{\operatorname{c_2}}
\def\Spec{\operatorname{Spec}}

\def\CC{{\mathbb C}}
\def\ZZ{{\mathbb Z}}

\def\PP{{\mathbb P}}

\def\O{{\mathcal O}}

\def\AA{{\mathbb A}}

\def\+{\oplus}
\def\*{\otimes}

\def\Bl{\operatorname{Bl}}

\DeclareMathOperator{\Br}{Br}

\newcommand{\CH}{\operatorname{CH}}
\newcommand{\nr}{\mathrm{nr}}
\newcommand{\dd}{\partial}

\title[Quartic sixfolds]{Very general quartic sixfolds\\ are not stably rational}

\author{John Christian Ottem}
 \address{Department of Mathematics,
 University of Oslo,
 Box 1053, Blindern,
 0316 Oslo, Norway}
 \email{johnco@math.uio.no}

\begin{document}

\begin{abstract}
We prove that a very general quartic sixfold admits no decomposition of the diagonal, and in particular is not stably rational.
The proof uses specialization to an explicit quartic sixfold birational to a quadric bundle of the type considered by Colliot-Th\'el\`ene and Ojanguren.
We also prove stable irrationality for very general six-dimensional intersections of four quadrics.
\end{abstract}
\maketitle



\section{Introduction}
A complex variety $X$ is called {\em rational} if it is birational to projective space and {\em stably rational} if $X\times \PP_\CC^m$ is rational for some $m\geq0$.
The main result of this paper is that a very general hypersurface of degree 4 and dimension 6 is not stably rational. 
\begin{theorem}\label{thm:main}
A very general quartic hypersurface $X\subset\PP^7_\CC$ admits no decomposition of the diagonal. In particular, $X$ is not stably rational.
\end{theorem}
Here {\em very general} means outside a countable union of proper closed subsets of the parameter space. 
A projective variety $X$ of dimension $n$ admits a {\em decomposition of the diagonal} if
$$
 [\Delta_X]=[X\times z]+[Z],
$$in the Chow group $\CH_n(X\times X)$, where $z$ is a zero-cycle of degree 1 and $Z$ does not dominate
the first factor. Every smooth projective stably rational variety admits such a
decomposition.

The decomposition of the diagonal property is useful in rationality problems because it specializes in families. That is, if a smooth projective variety $X$ specializes to a variety
$X_0$ and $X_0$ admits no such decomposition, then neither does $X$, so $X$ cannot be stably rational.
This method was introduced by Voisin \cite{Voi} and developed by Colliot-Th\'el\`ene--Pirutka \cite{CTP} and Schreieder \cite{Sch}.

The rationality problem for quartic hypersurfaces has a long history. Iskovskikh--Manin \cite{IM} proved that every smooth quartic threefold is irrational. Using the above specialization method, Colliot-Th\'el\`ene--Pirutka \cite{CTP} 
proved that a  very general quartic threefold admits no decomposition of the diagonal. 
Totaro \cite{Tot} proved the analogous statement for very general quartic fourfolds. 
Nicaise and the author \cite{NO} proved that very general quartic fivefolds are not stably rational, using toric degenerations. This was extended by Pavi\'c--Schreieder \cite{PS} to the nonexistence of a decomposition of the diagonal.
See Schreieder \cite{SchHyp} and Lange--Schreieder \cite{LS} for irrationality bounds for hypersurfaces in higher dimensions. Theorem~\ref{thm:main} settles the first open case for quartics.

As in \cite{LS,PS,Sch,SchHyp}, we use unramified cohomology to obstruct having a decomposition of the diagonal. 
Unramified cohomology first appeared in rationality problems in the work of Artin--Mumford \cite{AM} on the Brauer group.
Here we use degree 3 unramified cohomology, closely following the work of Colliot-Th\'el\`ene--Ojanguren \cite{CTO}.


Let $K=\CC(x,y,z)$ and let $g_1,g_2\in K^\times$. Colliot-Th\'el\`ene--Ojanguren considered quadrics of the form
\begin{equation}\label{eq:Q}
Q=\left\{S^2-xT^2-yU^2+xyV^2-g_1g_2W^2=0\right\}
\subset\PP^4_K
\end{equation}
and gave a criterion for constructing a nonzero class in $H^3_\nr(K(Q),\ZZ/2)$.

  

The starting point of the present paper was to search for $g_1,g_2$ such that the equation above becomes a quartic after a change of variables. This happens for the cubics
\begin{equation}\label{eq:sparse}
g_1=yz^2+xy+2xz+1, \qquad  g_2=yz^2+xy-2yz+1.\end{equation}
Here the difference $g_1-g_2=2z(x+y)$ has degree 2. On the $W=1$ chart, letting $s=w+(g_1+g_2)/2$ therefore makes the higher degree terms cancel and gives the affine quartic
\begin{equation}\label{eq:introaffine}
w^2+2(yz^2+xy+xz-yz+1)w
-xt^2-yu^2+xyv^2+z^2(x+y)^2=0.
\end{equation}
The projective closure of this is a quartic sixfold $X_0\subset\PP^7_\CC$ singular along a 3-plane.
Blowing up this 3-plane we obtain a sixfold $Y_0$ with a morphism $Y_0\to \PP^3_\CC$ whose generic fiber is $Q$.  
In Section~\ref{sec:criterion}, we verify the conditions of \cite{CTO} for these cubics, so we obtain a nonzero unramified
cohomology class on $Y_0$. More generally, Proposition~\ref{prop:criterion} gives a family of such examples.
In Section~\ref{sec:specialization}, we apply a specialization theorem of Schreieder \cite{Sch} to deduce Theorem~\ref{thm:main}.

Using similar arguments, we also obtain stable irrationality for very general quartic sixfolds
singular along a $3$-plane and for very general complete intersections
of types $(2,2,3)$ in $\PP^9_\CC$ and $(2,2,2,2)$ in $\PP^{10}_\CC$
(Propositions~\ref{prop:doubleplane}, \ref{prop:223}, and~\ref{prop:fourquadrics}).

We work over $\CC$ for simplicity, but the arguments apply over any algebraically closed field of characteristic zero.


\section{Preliminaries}\label{sec:cohomology}
In this section, we recall a few cohomological facts which will be needed later on. We refer to \cite{CTO}, \cite{CTS}, and \cite{Sch} for more background.

We work with fields $F\supset\CC$ and write $H^r(F,\ZZ/2)$ for the $r$-th Galois cohomology group (in characteristic zero, we canonically identify
$\mu_2^{\otimes r}$ with $\ZZ/2$).  We identify $H^1(F,\ZZ/2)$ with $F^\times/(F^\times)^2$, using the Kummer sequence. 
For $a_i\in F^\times$, we write
$$
 (a_1,\ldots,a_r)=(a_1)\cup \cdots \cup (a_r)\in H^r(F,\ZZ/2).
$$
These symbols are multilinear and invariant under permutations, and are zero when one of the entries is a square or when two entries are equal. They also satisfy the Steinberg relation
$(a,1-a)=0$ for $a\neq 0,1$. We will also identify $H^2(F,\ZZ/2)$ with the 2-torsion part of the Brauer group,
$\Br(F)[2]$. Then $(a,b)$ is the class of the quaternion algebra associated to the conic $U^2-aV^2-bW^2=0$ in $\PP^2_F$. In particular, $(a,b)=0$
if and only if this conic has an $F$-point.

Let $\nu\colon F^\times \to \ZZ$ be a discrete valuation over $\CC$ (meaning, $\nu(c)=0$ for every $c\in\CC^\times$).
We write $A_\nu$ for its valuation ring and $\kappa_\nu$ for its residue field.
For each $r\geq 1$, there is a residue map
$$
\dd_\nu\colon H^r(F,\ZZ/2)
\to H^{r-1}(\kappa_\nu,\ZZ/2).
$$
If the $u_i$ are units and $\pi$ is a uniformizer, then
\begin{equation}\label{eq:resrules}
\dd_\nu(u_1,\ldots,u_r)=0,\qquad \dd_\nu(u_1,\ldots,u_{r-1},\pi) =(\bar u_1,\ldots,\bar u_{r-1}),
\end{equation}
where $\bar u_i$ is the class of $u_i$ in $\kappa_\nu$. A class $\sigma\in H^r(F,\ZZ/2)$ is called
{\em unramified} if $\dd_\nu(\sigma)=0$ for every discrete
valuation $\nu$ on $F$ over $\CC$. We denote the subgroup of these by $H^r_{\nr}(F,\ZZ/2)$ (we omit the base field $\CC$ from the notation, for simplicity).


We write $\langle d_1,\ldots,d_r\rangle$ for the quadratic form $\sum d_iX_i^2$. For convenience, we include the following well-known lemma.

\begin{lemma}\label{lem:norms}
Let $a,b,d\in F^\times$.
\begin{enumerate}[(i)]
\item \label{adaifnasd} If $d=x^2-ay^2$, then $(a,d)=0$.
\item \label{adaifnasd2}If $d=x^2-ay^2-bz^2+abt^2$, then $(a,b,d)=0$.
\end{enumerate}
The statement \ref{adaifnasd2} also holds for nonzero values of
$\langle1,a,b,ab\rangle$.
\end{lemma}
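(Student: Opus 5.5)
The plan is to use the standard fact that a symbol $(a,b)$ vanishes as soon as $b$ is a norm from the quadratic algebra $F(\sqrt a)$, and to deduce (ii) from (i) by multiplicativity of norms. Both parts reduce quickly to this.

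For (i), first dispose of the case where $a$ is a square: then $(a)=0$ and so $(a,d)=0$. Otherwise $d=x^2-ay^2$ is a nonzero value of $\langle 1,-a\rangle$. Hence the conic $U^2-aV^2-dW^2=0$ has the $F$-point $(x:y:1)$, and by the identification of $(a,d)$ with the class of this conic recalled above, $(a,d)=0$.

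For (ii), again assume $a$ is not a square and put $L=F(\sqrt a)$, $N=N_{L/F}$. Write
$$
d=(x^2-ay^2)-b(z^2-at^2)=N(\alpha)-bN(\beta),\qquad \alpha=x+y\sqrt a,\ \beta=z+t\sqrt a .
$$
The argument then splits according to whether $\beta$ vanishes.

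If $\beta=0$, then $d=N(\alpha)$ with $\alpha\neq 0$. By (i), $(a,d)=0$, so $(a,b,d)=(a,d)\cup(b)=0$.

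If $\beta\ne0$, divide by $N(\beta)$ and use multiplicativity of the norm. This gives $d=N(\beta)\,(N(\gamma)-b)$ with $\gamma=\alpha/\beta$.
\begin{itemize}
\item If $N(\gamma)=b$, then $d=0$, which is excluded.
\item If $N(\gamma)\neq b$, set $c=N(\gamma)$, so that $d=N(\beta)(c-b)$ and $(a,b,d)=(a,b,N(\beta))+(a,b,c-b)$.
\end{itemize}
The first summand vanishes by (i). For the second, the case $c=0$ gives $(a,b,-b)=0$, since $(b,-b)=0$ follows from the Steinberg relation. If $c\neq0$, then $c-b=c(1-b/c)$, so
$$
(a,b,c-b)=(a,b,c)+(a,b,1-b/c).
$$
Here $(a,c)=0$ by (i), so $(a,b,c)=0$. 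Also $(b,1-b/c)=(c,1-b/c)+(b/c,1-b/c)$, where the last term vanishes by the Steinberg relation (when $b/c\ne1$, which holds by assumption). Cupping with $(a)$ and using $(a,c)=0$ kills the remaining term. So (ii) holds in this case as well.

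The variant for $\langle1,a,b,ab\rangle$ should follow by replacing $a,b$ with $-a,-b$. Then $d=x^2+ay^2+bz^2+abt^2$ gives $(-a,-b,d)=0$. I expect the main point to check is exactly this: the statement is meant for $(a,b,d)$ and not $(-a,-b,d)$. The difference involves terms like $(-1,\cdot,\cdot)$, which vanish because $-1$ is a square in $F\supset\CC$. So the two symbols agree by multilinearity.
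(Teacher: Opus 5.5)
Your proof is correct and is essentially the paper's argument: with $R=x^2-ay^2$ and $S=z^2-at^2$, your $c=N(\gamma)=R/S$ is the reciprocal of the paper's $e=S/R$, so your $1-b/c$ is exactly its $1-be$. The case split ($S=0$, $R=0$, $R,S\neq0$) and the use of $(-1)=0$ for $\langle1,a,b,ab\rangle$ also match the paper. The only differences are cosmetic: you prove (i) via the rational point $(x:y:1)$ on the conic instead of the Steinberg relation, and you phrase (ii) with norms from $F(\sqrt a)$, which adds the harmless extra case where $a$ is a square.
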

\begin{proof}
\ref{adaifnasd}: If $x,y\ne0$, let $e=a(y/x)^2$. Then $d=x^2(1-e)$, so working modulo squares, $(a,d)=(e,1-e)=0$. If $y=0$, then $d$ is a square. If $x=0$, then $(a,d)=(a,-ay^2)=(a,-a)=0$.

\ref{adaifnasd2}: Let $R=x^2-ay^2$ and $S=z^2-at^2$, so that $d=R-bS$. If $R,S\ne0$, then $(a,R)=(a,S)=0$, by \ref{adaifnasd}. 
Then with $e=S/R$, we have
$$
 (a,b,d)=(a,b,1-be)=(a,be,1-be)+(a,e,1-be)=0.
$$
Here the first term vanishes by the Steinberg relation and the second because $(a,e)=(a,S)-(a,R)=0$.
If $S=0$, then $d=R$ is nonzero, so $(a,d)=0$ as above, and hence $(a,b,d)=0$.
If $R=0$, then $d=-bS\ne0$, and we get $(a,b,d)=(a,b,b)+(a,b,S)=0$. The last statement follows because $-1$ is a square in $F$.
\end{proof}


In the arguments below, we will usually work in the henselization $A_\nu^h$ of the valuation ring. Passing to the henselization preserves the residue field and is compatible with residue maps (see \cite[p.~143]{CTO}). We will use the following two consequences of Hensel's lemma. Let $R$ be a henselian discrete valuation ring with residue field $\kappa$ of characteristic different from 2. If $x\in R^\times$ and $\bar{x}\in\kappa$ is a square, then $x$ is a square in $R$.
Also, for $u,v\in R^\times$, the equality $(\bar u,\bar v)=0$ over $\kappa$ implies $(u,v)=0$ over the fraction field of $R$. Indeed, a point on the smooth conic $X^2-\bar uY^2-\bar vZ^2=0$ over $\kappa$ lifts to an $R$-point.

\section{Quadrics of Colliot-Th\'el\`ene--Ojanguren type}\label{sec:criterion}
We now work over the field $K=\CC(x,y,z)$.  We will use the following criterion to construct the unramified cohomology class needed for Theorem \ref{thm:main}. 

\begin{proposition}[Colliot-Th\'el\`ene--Ojanguren]\label{prop:unramified}
For $g_1,g_2\in K^\times$, let $Q$ be the quadric
\eqref{eq:Q} and let $\alpha_i=(x,y,g_i)$. Suppose that
\begin{enumerate}[(i)]
\item \label{CTO1} $\alpha_1,\alpha_2\ne0$.
\item \label{CTO2} $\dd_\nu\alpha_1=0$ or $\dd_\nu\alpha_2=0$ for every discrete valuation $\nu$ on $K$ over $\CC$.
\end{enumerate}
Then $\alpha_1$ and $\alpha_2$ pull back to the same nonzero
unramified class $\gamma\in H^3(K(Q),\ZZ/2)$.
\end{proposition}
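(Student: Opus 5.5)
The proof has three parts. First we show that $\alpha_1$ and $\alpha_2$ agree in $H^3(K(Q),\ZZ/2)$. Then we show that the common class $\gamma$ is nonzero. Finally we show that $\gamma$ is unramified, and this last part is the main one.

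\emph{The two classes agree.} Over $K(Q)$, put $W=1$ in \eqref{eq:Q}. This gives
$$g_1g_2=s^2-xt^2-yu^2+xyv^2,$$
a nonzero value of $\langle 1,-x,-y,xy\rangle$. By Lemma~\ref{lem:norms}\ref{adaifnasd2}, $(x,y,g_1g_2)=0$ in $H^3(K(Q),\ZZ/2)$. By multilinearity, $\alpha_1-\alpha_2=\alpha_1+\alpha_2=(x,y,g_1g_2)$, so $\alpha_1=\alpha_2$ in $K(Q)$. Call this common image $\gamma$.

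\emph{Nonvanishing.} We use the standard result on the kernel of $H^3(K,\ZZ/2)\to H^3(K(Q),\ZZ/2)$ for the quadric of the form $q=\langle 1,-x,-y,xy,-g_1g_2\rangle$. This form is a neighbour of the Pfister form $\langle\langle x,y,g_1g_2\rangle\rangle$, and it is anisotropic whenever that Pfister form is. By Arason's theorem, the kernel is generated by the symbol $(x,y,g_1g_2)$; if $q$ is isotropic, the map is injective. So if $\gamma=0$, then $\alpha_1\in\{0,(x,y,g_1g_2)\}$. The first option is excluded by hypothesis~\ref{CTO1}. The second gives $\alpha_1=\alpha_1+\alpha_2$, that is, $\alpha_2=0$, which \ref{CTO1} also excludes. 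Hence $\gamma\neq 0$.

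\emph{Unramifiedness.} Let $w$ be a discrete valuation on $K(Q)$ over $\CC$, and let $\nu$ be its restriction to $K$. If $\nu$ is trivial, then $\alpha_i$ comes from $H^3(A_w,\ZZ/2)$, so $\dd_w\gamma=0$.

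Otherwise, by hypothesis~\ref{CTO2} choose $i$ with $\dd_\nu\alpha_i=0$. Then $\alpha_i$ lies in $H^3$ of the henselization $A_\nu^h$, hence in the image of $H^3$ of the residue field. Let $e$ be the ramification index of $w$ over $\nu$. By functoriality of residues,
$$\dd_w(\alpha_i|_{K(Q)})=e\cdot(\text{image of }\dd_\nu\alpha_i)=0.$$
Since $\gamma=\alpha_i$ in $K(Q)$, this gives $\dd_w\gamma=0$.

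The step needing care is this functoriality of residues under an extension of valued fields. We would pass to henselizations and cite \cite{CTO}: an element unramified at $\nu$ stays unramified at every $w$ lying over it. Because $\gamma$ equals \emph{either} $\alpha_1$ or $\alpha_2$, this works uniformly for every $w$. With this, $\gamma\in H^3_\nr(K(Q),\ZZ/2)$.
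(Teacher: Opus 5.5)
Your proof is correct and follows the paper's argument. Arason's kernel theorem for the Pfister neighbour, combined with condition (i), gives nonvanishing, and unramifiedness follows from condition (ii) via compatibility of residues, with the case of trivial restriction handled because all entries are units. The only difference is that you get $\alpha_1=\alpha_2$ over $K(Q)$ directly from Lemma~\ref{lem:norms}\ref{adaifnasd2}, since $g_1g_2$ is a value of $\langle 1,-x,-y,xy\rangle$ there, whereas the paper reads this off the same Arason kernel computation that it uses for nonvanishing.
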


\begin{proof}This is \cite[Proposition~3.1]{CTO}. For completeness, we sketch a proof, following \cite[Proposition~17]{Sch}. Recall that a {Pfister form} is a tensor product of binary forms $\langle1,-a_i\rangle$ with $a_i\in K^\times$. A {Pfister neighbour} is, up to scaling and an invertible linear change of variables, a subform $q_1$ of a Pfister form $q_2$ with $\rank(q_1)>\tfrac12 \rank(q_2)$.
In our case, the rank-5 form of $Q$ is a Pfister neighbour of the rank-8 Pfister form
$
\langle1,-x\rangle\otimes\langle1,-y\rangle
\otimes\langle1,-g_1g_2\rangle.$ For such a neighbour, Arason's theorem \cite[Satz~5.6]{Ar} shows that the kernel of
$H^3(K,\ZZ/2)\to H^3(K(Q),\ZZ/2)$ is generated by the symbol $(x,y,g_1g_2)=\alpha_1+\alpha_2$. The classes $\alpha_1$ and $\alpha_2$ therefore pull back to the same class $\gamma$ in $H^3(K(Q),\ZZ/2)$.  The class $\gamma$ is nonzero by condition~\ref{CTO1}.  
To see that it is unramified, let $\nu$ be a discrete valuation of $K(Q)$ over $\CC$.
If the restriction of $\nu$ to $K$ is nontrivial, condition~\ref{CTO2} and compatibility of residues give $\dd_\nu(\gamma)=0$.
And if the restriction is trivial, all the entries of the symbols are units in $A_\nu$, so the residue is zero by \eqref{eq:resrules}.
\end{proof}


The two classes $(x,y,g_1)$ and $(x,y,g_2)$ are ramified in $H^3(K,\ZZ/2)$. As the proof shows, however, their
pullbacks to the quadric coincide, and the assumptions on the residues give that this class is unramified.

The next result gives a family of pairs $g_1,g_2$ satisfying this criterion.
\begin{proposition}\label{prop:criterion}
Let $a,b,A,B\in\CC[z]$ and $c_1,c_2\in\CC^\times$, and define
\begin{equation}\label{eq:shape}
\begin{aligned}
g_1&=1+ax+B^2y+c_1xy,\\
g_2&=1+A^2x+by+c_2xy.
\end{aligned}
\end{equation}
Assume that $a,b$ are not squares and that at least one of them has even degree. Then the classes $\alpha_1=(x,y,g_1)$ and $\alpha_2=(x,y,g_2)$ satisfy conditions~\ref{CTO1} and~\ref{CTO2} of
Proposition~\ref{prop:unramified}.
\end{proposition}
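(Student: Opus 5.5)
The plan is to treat the two conditions separately: \ref{CTO1} by exhibiting an explicit discrete valuation along which $\alpha_i$ has a nonzero residue, and \ref{CTO2} by a case analysis on the values $\nu(x),\nu(y),\nu(g_1),\nu(g_2)$ (and $\nu(z)$) for an arbitrary discrete valuation $\nu$ on $K$ over $\CC$. Throughout we work in the henselization $A_\nu^h$ and use the two Hensel consequences from Section~\ref{sec:cohomology}, together with Lemma~\ref{lem:norms} and the residue rules \eqref{eq:resrules}.

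For \ref{CTO1}, take $\nu$ to be the valuation of the divisor $y=0$. Since $x$ and $g_1$ are units there and $g_1\equiv 1+ax \pmod y$, we get $\dd_\nu\alpha_1=(x,1+ax)\in H^2(\CC(x,z),\ZZ/2)$. Next take the residue of this class along the divisor $1+ax=0$ of $\CC(x,z)=\CC(z)(x)$. This gives the class of $x=-1/a$ in $H^1(\CC(z),\ZZ/2)$. Since $-1$ is a square, that class is the class of $a$, which is nonzero because $a$ is not a square. Hence $\alpha_1\neq0$. Symmetrically, the valuation $x=0$ gives $\dd\alpha_2=(y,1+by)$, whose residue along $1+by=0$ is the class of $b\neq0$. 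So $\alpha_2\neq0$.

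For \ref{CTO2}, we split into cases.

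\emph{All of $x,y,g_1,g_2$ are units.} Then both residues vanish by \eqref{eq:resrules}.

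\emph{$\nu(x)>0$, $\nu(y)=0$.} Then $g_1\equiv 1+B^2y$ modulo the maximal ideal. If $\bar g_1\neq0$, then $g_1$ is a unit and $\dd_\nu\alpha_1=\nu(x)(\bar y,1+\bar B^2\bar y)$. This is zero by Lemma~\ref{lem:norms}\ref{adaifnasd}, since $1+B^2y=1^2-(-y)B^2$ and $(-y)=(y)$ because $-1$ is a square. If instead $\bar g_1=0$, then $\bar y=-1/\bar B^2$ is a square, so $y$ is a square in $A_\nu^h$ and $\alpha_1=0$ there.

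\emph{$\nu(y)>0$, $\nu(x)=0$.} This is the same argument with the roles swapped, showing $\dd_\nu\alpha_2=0$.

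\emph{$\nu(x)>0$ and $\nu(y)>0$.} Both $g_i\equiv1$, so both are squares by Hensel. Hence $\alpha_1=\alpha_2=0$.

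\emph{Negative values of $\nu(x)$ or $\nu(y)$.} Rewrite $g_1=c_1xy\,(1+\tfrac{a}{c_1y}+\tfrac{B^2}{c_1x}+\tfrac{1}{c_1xy})$ and use $(x,y,xy)=(x,y,-1)=0$ to reduce to the previous cases with $x,y$ replaced by $1/x,1/y$. In the mixed-sign cases one expands in the same way and applies Lemma~\ref{lem:norms}.

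\emph{The hard case: $x,y$ units and $\nu(g_1)>0$.} In this case $\dd_\nu\alpha_1=\nu(g_1)(\bar x,\bar y)$. If $g_2$ is a unit we are done, because then $\dd_\nu\alpha_2=0$. So the remaining situation is $\nu(g_1),\nu(g_2)>0$ at once, and we must show $(\bar x,\bar y)=0$ in $\kappa_\nu$. We would use $\bar g_1=0$ to write $\bar y(\bar B^2+c_1\bar x)=-(1+\bar a\bar x)$, and $\bar g_2=0$ similarly. We would then try to express $\bar y$ or $\bar x$ modulo squares as a norm from $\kappa_\nu(\sqrt{\bar x})$, i.e.\ of the form $u^2-\bar x v^2$, so that Lemma~\ref{lem:norms}\ref{adaifnasd} applies.

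Within this hard case, we would split according to whether $z$ is integral at $\nu$. Valuations with $\nu(z)<0$ are exactly where the hypothesis that $a$ or $b$ has even degree must enter. If, say, $\deg a$ is even, then at such a valuation $a=z^{\deg a}\cdot(\text{lead coefficient}+\dots)$ is a square times a $1$-unit, hence a square in $A_\nu^h$. This square-ness of $a$ (or $b$) should be what makes the norm identity available and forces $(\bar x,\bar y)=0$. This double-vanishing case, especially at valuations centered at infinity in $z$, is where I expect the main difficulty, and it is the step I would check most carefully.
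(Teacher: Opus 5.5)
Part (i) of your proposal is correct and is the paper's argument. Part (ii) has a genuine gap in exactly the case you flag: $x,y,z\in A_\nu$, $x,y$ units, and $\nu(g_1),\nu(g_2)>0$.

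\textbf{The double-vanishing case.} Norm identities in $\kappa_\nu$ cannot close this case. Suppose you use $\bar g_1=0$. Since $c_1$ is a square, $\bar B^2+c_1\bar x$ is a norm from $\kappa_\nu(\sqrt{\bar x})$, so the relation $\bar y(\bar B^2+c_1\bar x)=-(1+\bar a\bar x)$ only yields $(\bar x,\bar y)=(\bar x,1+\bar a\bar x)$. That is precisely the class you showed to be nonzero in part (i); symmetrically, $\bar g_2=0$ only gives $(\bar x,\bar y)=(\bar y,1+\bar b\bar y)$. So the vanishing must come from the two relations holding simultaneously, and the missing idea is geometric.
\begin{enumerate}[(a)]
\item The center $Z$ of $\nu$ on $\AA^3_\CC$ lies in $\{g_1=g_2=0\}$.
\item The polynomials $g_1,g_2$ are irreducible and distinct, hence coprime. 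This is where non-squareness of $a$ and $a\neq A^2$ enter. Hence $\dim Z\le 1$.
\item Then $\Br(\CC(Z))[2]=0$, by Tsen's theorem if $Z$ is a curve and trivially if $Z$ is a point.
\item The class $(\bar x,\bar y)$ is pulled back from $\CC(Z)$, so it vanishes. Hensel's lemma then gives $(x,y)=0$ over $K_\nu^h$, which kills both $\alpha_1$ and $\alpha_2$.
\end{enumerate}

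\textbf{Where $\nu(z)<0$ and the even-degree hypothesis belong.} You place them inside the hard case, but they belong elsewhere. Your earlier cases use reductions such as $\bar g_1=1+\bar B^2\bar y$ and $\bar g_i=1$. These silently assume $a,b,A,B\in A_\nu$, which fails when $\nu(z)<0$. So those valuations must be handled separately across all cases. They do not require $(\bar x,\bar y)$ at all. If $\deg a$ is even, then $a=p^2$ in $K_\nu^h$ (as you observe), and $c_1=q^2$. Hence $g_1=1+p^2x+B^2y+q^2xy$ is a value of $\langle 1,x,y,xy\rangle$, and Lemma~\ref{lem:norms}\ref{adaifnasd2} gives $\alpha_1=0$ over $K_\nu^h$, whatever $\nu(x),\nu(y)$ are. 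If $\deg a$ is odd, then $\deg b$ is even and the same argument gives $\alpha_2=0$.

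\textbf{The mixed-sign cases.} These are only gestured at in your proposal. With $\nu(z)\ge 0$ and $\nu(x)<0$, the element $c_2+b/x$ is a unit reducing to $c_2$, hence a square $q^2$. Then $g_2=1+A^2x+q^2xy$, so $\alpha_2=0$ by the same lemma, regardless of $\nu(y)$. Symmetrically, $\nu(y)<0$ gives $\alpha_1=0$. Your inversion $x\mapsto 1/x$ does correctly handle the case where both $\nu(x)$ and $\nu(y)$ are negative, but only when $\nu(z)\ge 0$.
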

First of all, the assumptions imply that $g_1,g_2$ are irreducible. Indeed, a polynomial of the form
$1+px+qy+cxy$ cannot factor over $\CC(z)$ unless $pq=c$, and this does not happen here because $c_1,c_2$ are constants.
$g_1$ and $g_2$ are moreover different because $a\ne A^2$, so they are coprime.

The equations and the proof below are inspired by Hassett--Pirutka--Tschinkel \cite{HPT}
and Colliot-Th\'el\`ene's exposition \cite{CT}.  In that example, the discriminant consists of three lines and a conic tangent to each. The tangency makes
the equation of the conic a square on each line, which is used to prove that the cohomology class is unramified.

For the polynomials above, we have the following restrictions to the coordinate planes:
$$
\begin{array}{c|cc}
 & x=0 & y=0\\ \hline
g_1 & 1+B^2y & 1+ax\\
g_2 & 1+by & 1+A^2x
\end{array}.
$$Here the residue of $\alpha_1=(x,y,g_1)$ at the generic point of $\{x=0\}$ is equal to $(y,1+B^2y)$ in $\Br(\CC(y,z))[2]$, and this is zero by Lemma~\ref{lem:norms}\ref{adaifnasd}. Likewise,
$\alpha_2=(x,y,g_2)$ has zero residue along $\{y=0\}$. 
The two other restrictions will be used to show that $\alpha_1,\alpha_2$ are nonzero. 
\begin{proof}
We first check \ref{CTO1}. Along $\{y=0\}$, the residue $\partial_y\alpha_1$ is
$(x,1+ax)$ in $\Br(\CC(x,z))[2]$. In the $xz$-plane, the curve $C=\{1+ax=0\}$ is integral
with function field $K(C)\simeq \CC(z)$ where $x=-1/a$. The residue of $(x,1+ax)$ at $C$ is then given by the class of $-1/a$ in $\CC(z)^\times/(\CC(z)^\times)^2$, which is nonzero because $a$ is not a square. This shows that $\alpha_1\ne0$. The same argument with $\alpha_2$ along $\{x=0\}$ shows that $\alpha_2\ne0$.

For condition \ref{CTO2}, let $\nu$ be a discrete valuation of $K$ over $\CC$ and let
$R=A_\nu^h$ be the henselization of $A_\nu$, with fraction field $K_\nu^h$. There are three cases:

\smallskip\noindent\textit{Case 1: $x,y,z\in R$.}
Suppose first that $\nu(x)>0$. If also $\nu(y)>0$, then $\bar g_1=1$ in $\kappa_\nu$,
so Hensel's lemma shows that $g_1$ is a square in $K_\nu^h$, and $\alpha_1=(x,y,g_1)=0$. 
Otherwise, $y$ is a unit and $\bar g_1=1+\bar y\bar B^2$.
If $g_1$ is a unit, the smooth conic $U^2-yV^2-g_1W^2=0$ over $R$ has the $\kappa_\nu$-point $(1:i\bar B:1)$.
This lifts to $R$ by Hensel's lemma, so $(y,g_1)=0$ over
$K_\nu^h$. Hence $\alpha_1=(x)\cup(y,g_1)=0$.
If $g_1$ is not a unit, then $\bar{g_1}=0$ in $\kappa_\nu$, so $\bar y=-1/\bar B^2$ is a square. By Hensel's lemma,
$y$ is a square in $K_\nu^h$ and again $\alpha_1=0$. A similar argument works if $\nu(y)>0$, in which case $\alpha_2=0$.

Finally, suppose that $x,y$ are both units. If some $g_i$ is a unit, all
entries of $\alpha_i$ are units and $\dd_\nu\alpha_i=0$ by \eqref{eq:resrules}.
Otherwise the center $Z\subset\AA^3_\CC$ of $\nu$ is contained in $\{g_1=g_2=0\}$.
As $g_1,g_2$ are coprime, we must have $\dim Z\le1$. Here $\Br(\CC(Z))[2]=0$ by Tsen's
theorem if $Z$ is a curve, and because $\CC$ is algebraically closed when $Z$ is a point. The class $(\bar x,\bar y)$ is the pullback of a class
in $\Br(\CC(Z))[2]$, so it is zero. Hensel's lemma therefore gives $(x,y)=0$ over $K_\nu^h$,
so $\alpha_1$ and  $\alpha_2$ both vanish there.

\smallskip\noindent\textit{Case 2: $\nu(z)<0$.} If $\deg a=2m$, then $a/z^{2m}$ is a unit whose
reduction to $\kappa_\nu$ is the leading coefficient of $a$. This coefficient is a complex number, so it is a square. Hensel's lemma
shows that $a$ is a square in $K_\nu^h$. Writing $a=p^2$ and $c_1=q^2$, we obtain $g_1=1+xp^2+yB^2+xyq^2.$ 
Lemma~\ref{lem:norms} then gives $\alpha_1=0$. If $\deg a$ is odd, then $\deg b$ is even, and the same argument gives $\alpha_2=0$.

\smallskip\noindent\textit{Case 3: $\nu(z)\ge 0$ and either $\nu(x)< 0$ or $\nu(y)<0$.} 
Then $z\in R$.
If $\nu(x)<0$, then $c_2+b/x$ is a unit with reduction $c_2$, so it is a square by Hensel's lemma. Writing it as
$q^2$ for $q\in R^\times$, we obtain
$
g_2=1+xA^2+xyq^2.
$
Lemma~\ref{lem:norms} therefore gives $\alpha_2=0$. Similarly, if $\nu(y)<0$, the unit $c_1+a/y$ is a square and the same argument gives
$\alpha_1=0$.
\end{proof}

\section{Quartic sixfolds}\label{sec:quartic}
In this section, we construct a quartic sixfold with function field $K(Q)$, where $Q$ is the quadric \eqref{eq:Q}.
Let $a,b,A,B\in\CC[z]$ and $c_1,c_2\in\CC^\times$
satisfy the conditions of Proposition~\ref{prop:criterion}. We also make the assumption that
$A,B,a-A^2,b-B^2$ have degree at most 1.

With these assumptions, $g_1$ and $g_2$ have the same degree 3 part, so $g_1-g_2$ has degree at most 2. Let $h_1=(g_1+g_2)/2$ and $h_2=(g_1-g_2)/2$. 
Since $g_1g_2=h_1^2-h_2^2$, the substitution $s=w+h_1$ gives the affine quartic
\begin{equation}\label{eq:affine}
w^2+2h_1w-xt^2-yu^2+xyv^2+h_2^2=0.
\end{equation}
This quartic is irreducible, because $Q$ is irreducible.
 \begin{example}
A sparse example is obtained by taking $a=2z$, $b=z^2-2z$, $A=0$, $B=z$, and $c_1=c_2=1$.
This leads to the affine quartic \eqref{eq:introaffine} in the introduction. Among all the $a,b,A,B,c_1,c_2$ satisfying the above criterion, this tuple appears to give the fewest monomials (twelve) in the corresponding quartic sixfold.
\end{example}

\begin{example}\label{ainbiaedrnfbnsdcn}
The tuple $(a,b,A,B)=(2-z^2,2+z^2,iz,z)$ and $c_1=c_2=1$ 
leads to a quartic which has only three monomials of degree 4.
\begin{equation}\label{asidvnairndasdfwe}
w^2+2(1+x+y+xy-xz^2+yz^2)w
-xt^2-yu^2+xyv^2+(x-y)^2=0.
\end{equation}We will use this quartic in Section \ref{sec:ci}.
\end{example}

Choose homogeneous coordinates $x_0,\ldots,x_7$ on $\PP^7_\CC$ so that
$x,y,z,w,t,u,v$ correspond to $x_1/x_0,\ldots,x_7/x_0$ respectively. The homogenization of \eqref{eq:affine} is
\begin{equation}\label{eq:F0}
 F_0=x_0^2x_4^2+2H_1 x_4-x_1x_0x_5^2-x_2x_0x_6^2+x_1x_2x_7^2+H_2^2,
\end{equation}
where $H_1,H_2\in \CC[x_0,x_1,x_2,x_3]$ are the homogenizations of
$h_1,h_2$ to degrees 3 and 2, respectively.
 
The polynomial $F_0$ is irreducible, so the hypersurface $X_0=\{F_0=0\}$ is integral and is the projective closure of \eqref{eq:affine} in $\PP^7_\CC$. Looking at the monomials appearing in $F_0$, we see that $X_0$ has multiplicity 2 along the 3-plane $L=\{x_0=x_1=x_2=x_3=0\}$. 

Write $B=\PP^3_\CC$ with homogeneous coordinates $u_0,u_1,u_2,u_3$.
Blowing up $L$ resolves the projection map $\PP^7_\CC\dashrightarrow B$  and we obtain morphisms $p\colon M=\Bl_L\PP^7_\CC\to\PP^7_\CC$ and  $q\colon M\to B$.
The morphism $q$ is a $\PP^4$-bundle, given by
$M=\PP(\O_B^{\oplus4}\oplus\O_B(1))$ (where we use the quotient convention for projective bundles). 
The hyperplane bundle is given by $\O_M(1)=p^*\O(1)$.

We choose global sections of line bundles on $M$ as follows.
The first four summands of $\O_B^{\oplus4}\oplus\O_B(1)$
define sections $v_0,v_1,v_2,v_3$ of $\O_M(1)$. The last summand defines a section
$e$ of $\O_M(1)\otimes q^*\O_B(-1)$ whose zero scheme is the exceptional divisor.
We also write $u_i$ for the pullbacks of the coordinate sections on $B$.
Then $\O_M(1)$ is globally generated by $u_0e,u_1e,u_2e,u_3e,v_0,v_1,v_2,v_3$, and these
define the blow-up morphism $p\colon M\to \PP^7_\CC$.
Since $X_0$ is integral, the strict transform $Y_0$ is the blow-up of $X_0$ along $L$.
Substituting into \eqref{eq:F0}, we see that $Y_0\subset M$
is defined by the equation
\begin{equation}\label{eq:q0}
u_0^2v_0^2-u_1u_0v_1^2-u_2u_0v_2^2+u_1u_2v_3^2
+2H_1(u_0,u_1,u_2,u_3)v_0e+H_2(u_0,u_1,u_2,u_3)^2e^2=0.
\end{equation}
Identifying $K=\CC(x,y,z)=\CC(B)$, the substitutions above  show that the generic fiber of $Y_0\to B$ is given by $Q$, so $\CC(Y_0)=K(Q)$.

%


Let $G_i=u_0^3g_i(u_1/u_0,u_2/u_0,u_3/u_0)$ be the 
homogenization of $g_i$. The discriminant of $Y_0\to B$ is given by $-u_0^2u_1^2u_2^2G_1G_2$.
For the cubics in \eqref{eq:sparse}, its support consists of three planes and two cubic surfaces. 
One can compare this with the examples of
Artin--Mumford \cite{AM} (two plane curves of degree three),
Colliot-Th\'el\`ene--Ojanguren \cite{CTO} (a union of planes), and
Hassett--Pirutka--Tschinkel \cite{HPT} (a conic and three tangent lines).


\begin{remark}
The quartic $X_0$ is also singular along a second $3$-plane, namely $\{x_0=x_1=x_2=x_4=0\}$.
Projecting from this 3-plane, we obtain a different quadric threefold bundle $Y_0'\to \PP^3_\CC$.
The discriminant is again highly reducible. In the special case $c_1=c_2$, it 
factors as $x_0^3x_1^2x_2^2x_4 R$ where $R$ is a quartic.
\end{remark}

\section{Proof of Theorem \ref{thm:main}}\label{sec:specialization}
To prove the main theorem, we use the following specialization result for the decomposition of the diagonal property.

\begin{proposition}[Schreieder]\label{prop:specialization}
Let $Z$ be an integral complex projective variety and let $\tau\colon\widetilde Z\to Z$ be a projective resolution
which is an isomorphism over an open subset $U\subset Z$. Suppose that
$D=\widetilde Z\setminus\tau^{-1}(U)$ is a simple normal
crossings divisor. If, for some $r\ge 1$, there is a nonzero class
$
 \beta\in H^r_{\nr}(\CC(Z),\ZZ/2)$ which restricts to zero in $H^r(\CC(D_i),\ZZ/2)$ for every
irreducible component $D_i$ of $D$, then:
\begin{enumerate}[(i)]
\item\label{specialization:special}
$Z$ admits no decomposition of the diagonal.
\item\label{specialization:general}
For any flat projective family with geometrically integral fibers
over an integral complex base that contains $Z$ as a fiber,
a very general member admits no decomposition of the diagonal.
\end{enumerate}
\end{proposition}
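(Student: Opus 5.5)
This statement is Schreieder's specialization theorem (\cite[Proposition~26 and Theorem~9]{Sch}). The plan is to reproduce its proof: a decomposition of the diagonal on $Z$, or on a very general fiber, would force $\beta$ to vanish, by letting the correspondence $\Delta$ act on unramified cohomology.

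\textbf{Step 1: part \ref{specialization:special}.} Suppose $[\Delta_Z]=[Z\times z]+[W]$ in $\CH_n(Z\times Z)$, where $W$ does not dominate the first factor. The first move is to pull this back along $\tau\times\tau$ to $\widetilde Z\times\widetilde Z$. Since $\tau$ is an isomorphism over $U$, the pullback gives
$$[\Delta_{\widetilde Z}]=[\widetilde Z\times\tilde z]+[W']+[W'']$$
in $\CH_n(\widetilde Z\times \widetilde Z)$. Here:
\begin{itemize}
\item $W'$ does not dominate the first factor;
\item $W''$ is supported on $\widetilde Z\times D$;
\item $\tilde z$ is a zero-cycle of degree one, which may be taken in $\tau^{-1}(U)$ by a moving argument.
\end{itemize}

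Next I let both sides act on $\beta$, viewed in $H^r_{\nr}(\CC(\widetilde Z),\ZZ/2)$, which is a birational invariant. Each term behaves as follows.
\begin{itemize}
\item The diagonal acts as the identity, so the left side gives $\beta$.
\item $[\widetilde Z\times\tilde z]$ acts through restriction to a point, and $H^r(\CC,\ZZ/2)=0$ for $r\ge1$.
\item $W'$ acts by zero because the generic point of $\widetilde Z$ is not in its first projection.
\item $W''$ factors through classes on the components $D_i$. Since $\beta|_{\CC(D_i)}=0$, this term is zero too. This is where the snc hypothesis is used: it allows the restriction to each $D_i$ and the Rost-style cycle-module action to be defined. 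Concretely, I would use the filtration by $\widetilde Z\times D_I$ on the strata and induct on codimension.
\end{itemize}
Altogether $\beta=0$, a contradiction.

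\textbf{Step 2: part \ref{specialization:general}.} Here I use the standard specialization of decompositions of the diagonal. If a very general fiber $X_\eta$ admitted one, then, because relative Chow schemes have countably many components, so would the geometric generic fiber. Fulton's specialization map then produces a decomposition on the special fiber $Z$ itself. This is valid for flat families with integral fibers and a DVR base obtained after base change, and it contradicts part \ref{specialization:special}.

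\textbf{Main obstacle.} The hardest step is the vanishing of the $W''$ contribution in Step 1. It requires careful bookkeeping: the action of a correspondence supported over $D$ on an unramified class must factor through the restrictions $\beta|_{D_i}$. Since $D$ may be reducible and the deeper strata $D_I$ intervene, I would first try Schreieder's trick of replacing $\beta$ by the associated class in $H^r(\widetilde Z\setminus D)$ and using localization sequences inductively over the strata.
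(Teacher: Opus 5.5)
Your outline is correct and is essentially the argument the paper relies on when it cites Schreieder's Proposition~26 (and its proof) and his Lemma~8: derive $[\Delta_{\widetilde Z}]=[\widetilde Z\times\tilde z]+[W']+[W'']$ and let it act on $\beta$ for part (i), then for part (ii) pass from the very general fiber to the geometric generic fiber by countability and specialize to $Z$ with Fulton's map. Two small corrections: first, $(\tau\times\tau)^*$ is not available on $\CH_n(Z\times Z)$ because $Z$ is singular, so obtain the relation on $\widetilde Z\times\widetilde Z$ by restricting to $U\times U\cong\tau^{-1}(U)\times\tau^{-1}(U)$ and using the localization sequence, absorbing the part supported on $D\times\widetilde Z$ into $W'$; second, your ``main obstacle'' needs no induction over the strata $D_I$, since each component of $W''$ lies in some $\widetilde Z\times D_i$ with $D_i$ smooth projective, and the projection formula then gives $(W'')^*\beta=\sum_i (W_i'')^*(\beta|_{D_i})=0$.
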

\begin{proof}
Part~\ref{specialization:special} follows from \cite[Proposition~26]{Sch} and its proof. Part~\ref{specialization:general} follows from the specialization statement in that proposition and \cite[Lemma~8]{Sch}.
\end{proof}
Here, for a prime divisor $D$ on a smooth variety, $\beta|_D$ is obtained by extending the unramified class $\beta$ over the local ring at the generic point of $D$ and then restricting to its residue field $\CC(D)$.

The advantage of the above proposition is that it allows us to work with a resolution of $Y_0$ without computing it explicitly. This is useful here because even after blowing up $L$, the sixfold $Y_0$ has a rather complicated singular locus. For the cubics $g_1,g_2$ in \eqref{eq:sparse}, a computation in \verb|Macaulay2| shows that it has nine components: 3 threefolds, 4 surfaces, and 2 curves. None of these dominate the base $B$.

By \cite[Proposition~28]{Sch}, the class $\gamma$ restricts to zero on every divisor of
a smooth projective model that does not dominate $B$. This takes care of the exceptional divisors which arise from 
desingularizing $Y_0$. Let $\pi=p|_{Y_0}\colon Y_0\to X_0$ and let
$E=\pi^{-1}(L)$ be the exceptional divisor. Since $E$ dominates $B$, we need to check the restriction
to this divisor directly.
\begin{lemma}\label{aslviunasriindn}
$\gamma|_E=0$ in $H^3(\CC(E),\ZZ/2)$.
\end{lemma}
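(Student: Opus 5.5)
The plan is to identify $E$ with a quadric surface bundle over $B$, and then to show that the symbol $(x,y)$ already vanishes in $\CC(E)$, which kills $\gamma|_E=(x,y,g_1)$.

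\emph{Step 1: describe $E$.} The divisor $E$ is cut out on $Y_0$ by $e=0$. Setting $e=0$ in \eqref{eq:q0} leaves the equation
$$u_0^2v_0^2-u_1u_0v_1^2-u_2u_0v_2^2+u_1u_2v_3^2=0$$
in the $\PP^3$-bundle $\{e=0\}\subset M$ over $B$. On $u_0\neq 0$, dividing by $u_0^2$, the generic fiber over $K=\CC(B)$ is the quadric surface
$$Q'=\{v_0^2-xv_1^2-yv_2^2+xyv_3^2=0\}\subset\PP^3_K.$$
This form is nondegenerate of rank $4$, so $Q'$ is geometrically integral. Hence $E$ is irreducible, dominates $B$, and satisfies $\CC(E)=K(Q')$.

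I also need to check that $Y_0$ is regular at the generic point of $E$ and that $E$ is reduced there, so that the restriction $\gamma|_E$ makes sense as described after Proposition~\ref{prop:specialization}. For this I look at the partial derivative of \eqref{eq:q0} with respect to $e$ along $e=0$. It equals $2H_1v_0$, which is not identically zero on $E$. So $Y_0$ is smooth at the generic point of $E$, and $e$ is a local equation there.

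\emph{Step 2: compute the restriction.} By Proposition~\ref{prop:unramified}, $\gamma$ is the pullback of $\alpha_1=(x,y,g_1)$ from $K$. Since $E$ dominates $B$, the valuation $\nu_E$ restricts trivially to $K$. Hence $x,y,g_1$ are units in the local ring $\O_{Y_0,E}$, and the symbol $(x,y,g_1)$ extends over that local ring. Its restriction is therefore the image of $(x,y,g_1)$ under $H^3(K,\ZZ/2)\to H^3(K(Q'),\ZZ/2)$. (Any extension agrees with this one, since $\gamma$ is unramified and the extension is unique.)

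\emph{Step 3: show $(x,y)=0$ in $K(Q')$.} Over $K(Q')$, with $v_i$ the generic point coordinates, we have
$$v_0^2-yv_2^2=x\,(v_1^2-yv_3^2).$$
Both norms $R=v_0^2-yv_2^2$ and $S=v_1^2-yv_3^2$ are nonzero at the generic point. By Lemma~\ref{lem:norms}\ref{adaifnasd}, $(y,R)=(y,S)=0$, and therefore
$$(y,x)=(y,R)-(y,S)=0.$$
Equivalently, the Pfister form $\langle 1,-x,-y,xy\rangle$ becomes isotropic, hence hyperbolic. Consequently $\gamma|_E=(x,y)\cup(g_1)=0$.

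The only step needing care is Step 1: confirming that the restriction is computed on a regular model at the generic point of $E$, and that $E$ is irreducible rather than splitting into several components. The derivative computation and the nondegeneracy of $Q'$ should settle both points.
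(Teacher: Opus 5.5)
Your proposal is correct and is essentially the paper's proof. The paper likewise shows $(x,y)=0$ in $\CC(E)$ from a norm relation on the generic fiber, namely $v_0^2-xv_1^2=y(v_2^2-xv_3^2)$ on the chart $v_3=1$ (norms from $K(\sqrt{x})$ rather than your $K(\sqrt{y})$), and then uses that $x,y,g_1$ are units at the generic point of $E$ because $E$ dominates $B$. The regularity check at the generic point of $E$, which you do in Step 1, is handled in the paper inside the proof of Theorem~\ref{thm:main} via the smoothness of $Y_0^\circ$ and of $E\cap Y_0^\circ$.
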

\begin{proof}
The exceptional divisor $E$ is obtained by setting $e=0$ in \eqref{eq:q0}, so
$$
E=\{u_0^2v_0^2-u_1u_0v_1^2-u_2u_0v_2^2+u_1u_2v_3^2=e=0\}.
$$This divisor is integral and its generic fiber over $B=\PP^3_\CC$
is the smooth quadric surface
\begin{equation}\label{eq:exceptional}
E_K=\{v_0^2-xv_1^2-yv_2^2+xyv_3^2=0\}\subset\PP^3_K.
\end{equation}
On the chart $v_3=1$,  the quadric has affine equation $v_0^2-xv_1^2-yv_2^2+xy=0$. We work in the function field $\CC(E)$. Let $c=v_2^2-x$, so that $v_0^2-xv_1^2=yc$. Both $c=v_2^2-x$ and
$yc=v_0^2-xv_1^2$ are nonzero and of the form $\lambda^2-x\mu^2$, so $(x,c)=(x,yc)=0$ by Lemma~\ref{lem:norms}. 
This gives $(x,y)=(x,c\cdot yc)=(x,c)+(x,yc)=0$ in $H^2(\CC(E),\ZZ/2)$.  
Now, since $E$ dominates $B$, the elements $x,y,g_1\in K^\times$ 
are units in the local ring of $Y_0$ at the generic point of $E$.
It follows that $\gamma|_E=(x,y,g_1)=(x,y)\cup (g_1)=0$ in $H^3(\CC(E),\ZZ/2)$.
\end{proof}
\begin{proof}[Proof of Theorem \ref{thm:main}]
We use Proposition~\ref{prop:specialization} with $Z=X_0$ and $\beta=\gamma$. The class $\gamma$ is nonzero and unramified by
Propositions~\ref{prop:criterion} and~\ref{prop:unramified}.

Let $B^\circ\subset B$ be the complement of the discriminant and let $Y_0^\circ=(q|_{Y_0})^{-1}(B^\circ)$.
Both $Y_0^\circ$ and $E\cap Y_0^\circ$ are smooth, since their fibers over $B^\circ$ are smooth quadrics. We may therefore choose a projective log resolution
$\rho\colon \widetilde Y_0\to Y_0$ of the pair $(Y_0,E\cup(Y_0\setminus Y_0^\circ))$ which is an isomorphism over $Y_0^\circ$. In Proposition~\ref{prop:specialization}, we take
$\widetilde Z=\widetilde Y_0$ and
$\tau=\pi\circ\rho:\widetilde Y_0\to X_0$.
This is a resolution of $X_0$ and is an isomorphism over the open set $
U=X_0\setminus\bigl(L\cup\pi(Y_0\setminus Y_0^\circ)\bigr)$ and the complement $\widetilde Y_0\setminus\tau^{-1}(U)$ is a simple
normal crossings divisor. The only component of this divisor which dominates $B$
is the strict transform $\widetilde E$ of $E$, as $\rho$ is an isomorphism over $Y_0^\circ$.
Since $\CC(\widetilde E)=\CC(E)$, Lemma \ref{aslviunasriindn} shows that $\gamma|_{\widetilde E}=0$.
On every other boundary component, $\gamma$ restricts to zero
by \cite[Proposition~28]{Sch}.  

Proposition~\ref{prop:specialization}~\ref{specialization:special} applied to $Z=X_0$ now shows that
$X_0$ admits no decomposition of the diagonal. Finally, applying Proposition~\ref{prop:specialization}~\ref{specialization:general}  to the universal family
of integral quartics in $\PP^7_\CC$, with special fiber $X_0$ and resolution $\widetilde Y_0\to X_0$, we obtain Theorem~\ref{thm:main}. 
\end{proof}


%

In fact, the same argument also proves the following.

\begin{proposition}\label{prop:doubleplane}
Let $L\subset\PP^7_\CC$ be a $3$-plane. A very general quartic
hypersurface with multiplicity 2 along $L$ admits no
decomposition of the diagonal and is not stably rational.
\end{proposition}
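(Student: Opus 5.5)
The plan is to rerun the proof of Theorem~\ref{thm:main} inside a smaller family. Let $V\subset H^0(\PP^7_\CC,\O(4))$ be the linear space of quartics with multiplicity at least $2$ along $L$. After a linear change of coordinates we may take $L=\{x_0=x_1=x_2=x_3=0\}$. The quartic $F_0$ of \eqref{eq:F0} lies in $V$: every monomial of $F_0$ has degree at least $2$ in $x_0,\dots,x_3$, which is exactly the observation that $X_0$ has multiplicity $2$ along $L$. Let $V^\circ\subset V$ be the open subset parametrizing integral hypersurfaces. It is nonempty because it contains $F_0$, so it is an irreducible, hence integral, base.

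Over $V^\circ$ we take the universal hypersurface $\mathcal X\to V^\circ$. This is a flat projective family: it is a family of hypersurfaces of fixed degree in $\PP^7_\CC$, and every fiber is integral by construction. Its fibers are therefore geometrically integral, and $X_0$ is one of them.

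In the proof of Theorem~\ref{thm:main} we already built the resolution $\tau=\pi\circ\rho\colon\widetilde Y_0\to X_0$ and the nonzero unramified class $\gamma$. We also showed, using Lemma~\ref{aslviunasriindn} and \cite[Proposition~28]{Sch}, that $\gamma$ restricts to zero on every component of the simple normal crossings boundary. None of this depends on the ambient family. Proposition~\ref{prop:specialization}\ref{specialization:general} applied to $\mathcal X\to V^\circ$ with special fiber $Z=X_0$ then shows that a very general member has no decomposition of the diagonal.

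For stable irrationality, a very general member $X$ is singular along $L$, so we cannot directly use the implication ``stably rational $\Rightarrow$ decomposition of the diagonal'' stated for smooth varieties. The plan is to pass to a resolution. Stable rationality is a birational property, and the decomposition of the diagonal property, as used by Schreieder, is checked on a resolution. So it suffices to show that a resolution $\widetilde X$ of $X$ (for instance one obtained from $\Bl_L X$) admits no decomposition. If needed, I would argue directly instead: the restriction of a very general $X$ to $\Bl_L X$ gives a quadric bundle over $\PP^3_\CC$, and the obstruction class specializes to $\gamma$ through the family of blow-ups.

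I expect this last point to be the main obstacle: making sure that ``no decomposition of the diagonal'' for the singular $X$ gives stable irrationality. I would handle it by invoking \cite[Proposition~26]{Sch} in its form for resolutions, which produces the obstruction on any smooth projective model of $X$. A smooth projective stably rational variety always admits a decomposition, so $X$ cannot be stably rational.
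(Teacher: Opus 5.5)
Your proposal is correct and follows the paper's argument. You apply Proposition~\ref{prop:specialization}\,\ref{specialization:general} to the universal family of integral quartics double along $L$, with special fiber $X_0$, reusing the resolution $\widetilde Y_0\to X_0$ and the class $\gamma$ from the proof of Theorem~\ref{thm:main}.

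The step you flag as the main obstacle is routine, and the paper leaves it implicit. If $X$ were stably rational, a resolution $\tau\colon\widetilde X\to X$ would be smooth, projective and stably rational, so it would admit a decomposition of the diagonal. Pushing that decomposition forward by $(\tau\times\tau)_*$ gives one on $X$. So nothing beyond the very general statement is needed, and no further use of \cite[Proposition~26]{Sch} is required.
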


\begin{proof}
This follows as above, by applying Proposition~\ref{prop:specialization}\,\ref{specialization:general}
to the universal family of integral quartics double along $L$,
with special fiber $X_0$.
\end{proof}

\begin{remark}
If one only wants stable irrationality, the specialization step above 
can be shortened using \cite[Theorem~3.14]{NO}.
Let $M=\Bl_L\PP^7_\CC$, let $H$ be the pullback of the hyperplane
class, and let $E$ be the exceptional divisor. The linear system $|4H-2E|$ is basepoint-free, contains $Y_0$,
and has a smooth general member. The open set $Y_0^\circ$ is smooth and maps to $B$ with generic
fiber the proper quadric $Q$. Since $g_1,g_2$ satisfy the conditions of
Proposition~\ref{prop:unramified}, \cite[Theorem~39]{Sch} directly shows that a very general
member admits no decomposition of the diagonal.
To obtain stable irrationality for very general quartic sixfolds, subdivide the Newton polytope of a general quartic by $a_0+\cdots+a_3=2$, where $a_i$ is the exponent of $x_i$.
The two maximal cells are the polytopes of $4H-2E$ on the blow-ups of $\PP^7_\CC$ along
$L$ and $\{x_4=x_5=x_6=x_7=0\}$, respectively. For very general coefficients, the corresponding hypersurfaces
$Y_1,Y_2$ are therefore stably irrational. The common face defines a divisor $Y_{12}$ of bidegree $(2,2)$
in $\PP^3_\CC\times\PP^3_\CC$. By \cite[Theorem~3.14]{NO}, the stable birational volume of the resulting degeneration is
$[Y_1]+[Y_2]-[Y_{12}].$ Since neither $Y_i$ is stably rational, the coefficient of $[\Spec\CC]$ in this expression is either $0$ or $-1$. 
The volume therefore differs from $[\Spec\CC]$ in the free abelian group on stable birational types, so the very general quartic sixfold is not stably rational.
\end{remark}

\begin{remark}
As explained in \cite{CTO}, these types of sixfolds are unirational. Indeed, over the degree 2 extension
$K(\xi)=\CC(\xi,y,z)$, where $\xi^2=x$, the substitutions
$T'=\xi T$, $V'=\xi V$ give the equation
$S^2-{T'}^2-yU^2+y{V'}^2-g_1g_2W^2=0$.
This smooth quadric has the rational point
$(S:T':U:V':W)=(1:1:0:0:0)$, so it is rational over $\CC(\xi,y,z)$.
This means that $X_0$ admits a unirational parametrization of degree 2.
We do not know whether a general quartic hypersurface in $\PP^7_\CC$ singular along a $3$-plane is unirational.
\end{remark}

\section{Complete intersections}\label{sec:ci}
The same construction gives irrationality results for other complete intersections.
\begin{proposition}\label{prop:223}
A very general complete intersection of type $(2,2,3)$ in $\PP^9_\CC$ admits no decomposition of the diagonal.
\end{proposition}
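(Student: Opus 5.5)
The plan is to repeat the proof of Theorem~\ref{thm:main}, replacing the quartic $X_0$ by a complete intersection of type $(2,2,3)$ with the same function field $K(Q)$. I would start from the quartic \eqref{asidvnairndasdfwe} of Example~\ref{ainbiaedrnfbnsdcn}. There $h_1=1+x+y+xy-xz^2+yz^2$ and $h_2=x-y$, so all the degree $4$ terms come from the products $xy$ and $z^2$. In $\AA^9$ with coordinates $x,y,z,m,n,w,t,u,v$, I introduce two new variables through the quadrics $m=xy$ and $n=z^2$. After this substitution \eqref{asidvnairndasdfwe} becomes the cubic
$$
w^2+2(1+x+y+m-xn+yn)w-xt^2-yu^2+mv^2+(x-y)^2=0 .
$$
Projecting to the coordinates $(x,y,z)$ identifies the affine variety with the original affine quartic. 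So its generic fiber over $K=\CC(x,y,z)$ is still the quadric $Q$ of \eqref{eq:Q} with $g_1,g_2$ as in Example~\ref{ainbiaedrnfbnsdcn}, and its function field is $K(Q)$. By Propositions~\ref{prop:criterion} and~\ref{prop:unramified} (Example~\ref{ainbiaedrnfbnsdcn} satisfies the hypotheses of Section~\ref{sec:quartic}), $K(Q)$ carries the nonzero unramified class $\gamma$.

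Next I would homogenize with a coordinate $x_0$. This gives $X_0'=\{x_0m=xy,\ x_0n=z^2,\ F=0\}\subset\PP^9_\CC$, where $F$ is the homogenized cubic. I would check, by a direct look at the equations at $x_0=0$ or with \verb|Macaulay2|, that this scheme is an integral complete intersection of dimension $6$. The point is that the hyperplane section $x_0=0$ is $6$-dimensional but not a component, so the complete intersection is the closure of the affine part. As in Section~\ref{sec:quartic}, $X_0'$ has multiplicity $2$ along the linear space $L'=\{x_0=x=y=z=m=n=0\}$, which is a $3$-plane in the coordinates $w,t,u,v$. Projecting from $L'$ gives a rational map to the surface-cone threefold $B'=\{x_0m=xy,\ x_0n=z^2\}\subset\PP^5$, which is birational to $\PP^3$. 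Blowing up $L'$ gives $Y_0'\to B'$, whose generic fiber is again $Q$ (up to rescaling the fiber coordinates).

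Then I would apply Proposition~\ref{prop:specialization} with $Z=X_0'$. Let $B'^\circ\subset B'$ be the smooth locus of $B'$ minus the discriminant and the hyperplane $x_0=0$. Over $B'^\circ$ the blow-up $Y_0'$ and the exceptional divisor $E'$ are smooth, so I take a log resolution that is an isomorphism there. The boundary components that do not dominate $B'$, or equivalently $\PP^3$ after further resolving the map, have $\gamma$ restricting to zero by \cite[Proposition~28]{Sch}. The only dominating component is the strict transform of $E'$. Its generic fiber is the quadric surface $\{v_0^2-xv_1^2-yv_2^2+xyv_3^2=0\}$, exactly as in \eqref{eq:exceptional}, because the terms involving $e$ vanish. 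So Lemma~\ref{aslviunasriindn} applies verbatim and gives $\gamma|_{E'}=0$. Part~\ref{specialization:general} of Proposition~\ref{prop:specialization}, applied to the family of complete intersections of type $(2,2,3)$ with geometrically integral fibers, would then conclude the proof.

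The main obstacle is the global geometry of $X_0'$, not the cohomology. One must verify that the projective closure is really a complete intersection with no extra components at infinity, so that it lies in the flat family of $(2,2,3)$ complete intersections. One must also verify that the only divisor of the resolution dominating the base is the exceptional divisor over $L'$. If the naive closure acquires a component at infinity, I would first try changing the degree-bookkeeping, for instance homogenizing $m$ and $n$ differently or using Example~\ref{ainbiaedrnfbnsdcn}'s freedom in $(a,b,A,B)$, until the three equations cut out the closure scheme-theoretically.
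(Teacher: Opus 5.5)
Your proof is correct and follows the paper's overall strategy. You rewrite the quartic of Example~\ref{ainbiaedrnfbnsdcn} as a cubic on a $(2,2)$ complete intersection and check that the resulting $(2,2,3)$ complete intersection is integral with function field $K(Q)$. You then resolve so that the only boundary divisor dominating the base has generic fiber $E_K$, and conclude with Lemma~\ref{aslviunasriindn}, \cite[Proposition~28]{Sch} and Proposition~\ref{prop:specialization}.

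Where you differ is the choice of auxiliary variables, and hence the birational model. The paper sets $r=xy$ and $s=(y-x)w$, so its second quadric $x_0x_9=(x_2-x_1)x_4$ involves the fiber coordinate $w$. The paper therefore does not blow up a $3$-plane. It takes the closure $\Gamma$ of the graph of $Z\dashrightarrow\PP^3$, identifies $\Gamma_K$ with $Q$ by an explicit linear map $\PP^4_K\to\PP^9_K$, and finds the dominating boundary divisor over $Z\cap\{x_0=0\}$. Your choice $n=z^2$ keeps both quadrics in the base variables $x_0,x,y,z,m,n$. Consequently $F$ restricted to the span $\langle L',b\rangle$ of $L'$ and a point $b\in B'$ is $\lambda$ times a quadric, and blowing up $L'$ reproduces Sections~\ref{sec:quartic}--\ref{sec:specialization} almost verbatim. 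The price is that the base becomes the rational threefold $B'\subset\PP^5$ instead of $\PP^3$. This is harmless: $B'$ is smooth on $\{x_0\neq 0\}\supset B'^\circ$, and \cite[Proposition~28]{Sch} is applied to the surjection $\widetilde Y_0'\to B'$ just as the paper applies it over $\PP^3$.

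Two statements in your write-up should be corrected, though neither breaks the argument.

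\emph{Dimension at infinity.} $X_0'\cap\{x_0=0\}$ is $5$-dimensional, not $6$-dimensional. There $xy=z^2=0$ and $F$ becomes $2(y-x)nw-xt^2-yu^2+mv^2$. This cuts a cubic fivefold in each of the $6$-planes $\{x_0=x=z=0\}$ and $\{x_0=y=z=0\}$. It is exactly this $5$-dimensionality that excludes components at infinity, since every component of $X_0'$ has dimension at least $6$; a $6$-dimensional piece at infinity would be one. Combined with Cohen--Macaulayness and integrality of the affine quartic, this proves by hand that $X_0'$ is an integral complete intersection.

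\emph{Order of vanishing along $L'$.} $X_0'$ is not double along $L'$ in the sense of Section~\ref{sec:quartic}. The two quadrics vanish to order $2$ along $L'$, but $F$ vanishes only to order $1$, through the monomials $x_0w^2,xt^2,yu^2,mv^2$. What you actually use is the factorization $F|_{\langle L',b\rangle}=\lambda\,q_b$. This makes the pullback of $F$ to the blow-up equal to $e$ times the equation of $Y_0'$, which yields the quadric bundle $Y_0'\to B'$ with generic fiber $Q$ and exceptional divisor with generic fiber $E_K$.
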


\begin{proof}
The basic idea is to introduce two extra variables and rewrite the quartic as a cubic.
We use the quartic \eqref{asidvnairndasdfwe} of Example \ref{ainbiaedrnfbnsdcn}, with degree 4 part 
$xyv^2+2(y-x)z^2w$. Letting $r=xy$ and $s=(y-x)w$, the quartic can be written
$$
c=w^2+2(1+x+y+xy)w-xt^2-yu^2+rv^2+2sz^2+(x-y)^2=0.
$$
Let $x_0,\ldots,x_9$ be homogeneous coordinates on $\PP^9_\CC$
such that $x_1/x_0,\ldots,x_9/x_0$ correspond to $x,y,z,w,t,u,v,r,s$.
Let $C$ be the homogenization of $c$, and define
$$
Z=\{x_0x_8-x_1x_2=x_0x_9-(x_2-x_1)x_4=C=0\}.
$$
A calculation in \verb|Macaulay2| checks that $Z$ is an integral complete intersection in $\PP^9_\CC$.
On $x_0\ne0$, we may solve for $x_8,x_9$ and then $C=0$ reduces to \eqref{asidvnairndasdfwe}.

Let $\Gamma\subset Z\times B$ be the closure of the graph of the projection $Z\dashrightarrow B=\PP^3_\CC$ onto $(x_0:x_1:x_2:x_3)$. Let $h_1=(g_1+g_2)/2$. 

Over $K=\CC(x,y,z)$, the map sending
$(S:T:U:V:W)$ to $(W:xW:yW:zW:S-h_1W:T:U:V:xyW:(y-x)(S-h_1W))$ is a linear map $\PP^4_K\to \PP^9_K$, which identifies
$Q$ with $\Gamma_K$. The inverse image $D$ of $Z\cap\{x_0=0\}$ has generic
fiber $E_K=Q\cap\{W=0\}$.

Choose an open set $B^\circ\subset B$ over which
$\Gamma\to B$ and $D\to B$ are smooth. Let $U\subset Z$
be the open subset where $x_0\ne0$ and the projection to $B$ lies in $B^\circ$.
As in the proof of Theorem~\ref{thm:main}, we may blow up $\Gamma$ 
without changing $\Gamma|_{B^\circ}$ to obtain a resolution
$\tau\colon\widetilde Z\to Z$ that is an isomorphism over $U$ and has simple normal crossings boundary
$\widetilde Z\setminus\tau^{-1}(U)$. The only component dominating $B$ has generic fiber $E_K$.
The nonzero unramified class $\gamma$ 
restricts to zero on this component by
Lemma~\ref{aslviunasriindn}, and on every other boundary
component by \cite[Proposition~28]{Sch}.
Applying Proposition~\ref{prop:specialization} to the universal family of integral complete intersections of
type $(2,2,3)$, we obtain the result.
\end{proof}

We next consider complete intersections of quadrics. The rationality problem for complete intersections has been widely studied.
Beauville \cite{Bea} proved that a smooth intersection of three quadrics in $\PP^6_\CC$ is not rational, and Hassett--Tschinkel \cite{HT} proved that a very
general such threefold is not stably rational. Hassett--Pirutka--Tschinkel \cite{HPT3}
proved that a very general intersection of three quadrics
in $\PP^7_\CC$ does not admit a decomposition of the diagonal. 
See \cite{NO} for results in higher dimensions.

The first open case concerns complete intersections of four quadrics
in $\PP^{10}_\CC$. Before treating this case, we first study a different family of quadric threefold bundles over $\PP^3_\CC$,
complete intersections of divisors of bidegrees $(1,1),(1,1),(1,1),(1,2)$ in
$\PP^3_\CC\times\PP^7_\CC$. This construction follows \cite[Proposition~6 and Section~3.1]{HPT3} closely.

\begin{proposition}\label{prop:incidenceCI}
A very general complete intersection of three $(1,1)$-divisors and one $(1,2)$-divisor 
in $\PP^3_\CC\times\PP^7_\CC$ admits no decomposition of the diagonal.
\end{proposition}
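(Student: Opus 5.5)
The plan is to specialize to an explicit complete intersection $Z_0\subset B\times\PP^7_\CC$, with $B=\PP^3_\CC$, whose projection to $B$ has generic fiber the quadric $Q$ of \eqref{eq:Q}, for suitable $g_1,g_2$ of the shape \eqref{eq:shape}. Then Proposition~\ref{prop:specialization} does the rest. In the spirit of \cite[Proposition~6]{HPT3}, take coordinates $(u_0:\dots:u_3)$ on $B$ and $(v_0:\dots:v_7)$ on $\PP^7_\CC$. Use the three $(1,1)$-divisors to make three of the $v$'s proportional, over the generic point of $B$, to linear expressions in the remaining variables. For example, impose
\[
u_0v_5=u_1v_4,\qquad u_0v_6=u_2v_4,\qquad u_0v_7=u_3v_4 .
\]
With these, the fiber over $K=\CC(x,y,z)$ becomes $\PP^4_K$ with coordinates $(v_0:\dots:v_4)$, and $v_5,v_6,v_7$ are recovered as $xv_4,yv_4,zv_4$. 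The $(1,2)$-divisor should restrict on this $\PP^4_K$ to the form $S^2-xT^2-yU^2+xyV^2-g_1g_2W^2$, up to the substitution $s=w+h_1$. Here $g_1,g_2$ are cubic in $x,y,z$, so $g_1g_2W^2$ is not literally linear in $u$. The point of the three $(1,1)$-relations is that monomials $u_iv_jv_k$ involving $v_5,v_6,v_7$ supply the higher-degree dependence in $x,y,z$.

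Concretely, I would write $g_1g_2=h_1^2-h_2^2$ as in Section~\ref{sec:quartic} and use the affine equation \eqref{eq:affine}. The quartic \eqref{asidvnairndasdfwe} of Example~\ref{ainbiaedrnfbnsdcn} is the natural candidate, since its high-degree terms are few. I would rewrite each term so that it is bilinear: degree one in the base coordinates and quadratic in $(v_0,\dots,v_7)$. Terms such as $xt^2$ become $u_1v_1^2$, and $xyv^2$ becomes $u_1 v_6 v_3$-type monomials via $yv_4\leftrightarrow v_6$. The $h_1w$ and $h_2^2$ terms are handled by absorbing the extra powers of $x,y,z$ into $v_5,v_6,v_7$. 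If the degree budget does not fit, I would instead choose a different tuple $(a,b,A,B,c_1,c_2)$ from Proposition~\ref{prop:criterion} adapted to this bilinear structure. I expect finding such an explicit equation, and checking with \verb|Macaulay2| that $Z_0$ is an integral complete intersection, to be the main obstacle.

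Given $Z_0$, the argument copies the proof of Theorem~\ref{thm:main} and Proposition~\ref{prop:223}. By Propositions~\ref{prop:criterion} and~\ref{prop:unramified}, $\gamma=(x,y,g_1)$ is a nonzero unramified class on $\CC(Z_0)=K(Q)$. Let $B^\circ$ be an open set over which $Z_0\to B$ and the boundary locus $\{v_4=0\}$ are smooth. Take a log resolution of $Z_0$ that is an isomorphism over the preimage of $B^\circ$ minus $\{v_4=0\}$.

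The boundary components are of two kinds:
\begin{itemize}
\item Components not dominating $B$ carry trivial restriction of $\gamma$ by \cite[Proposition~28]{Sch}.
\item The only dominating component has generic fiber $Q\cap\{W=0\}=E_K$, the quadric surface \eqref{eq:exceptional}. The restriction vanishes there by Lemma~\ref{aslviunasriindn}, since $(x,y)=0$ on $\CC(E_K)$.
\end{itemize}

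Proposition~\ref{prop:specialization}\ref{specialization:general} now applies to the universal family of integral complete intersections of this type containing $Z_0$. It gives that a very general member admits no decomposition of the diagonal.
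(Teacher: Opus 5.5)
There is a genuine gap. The explicit complete intersection $Z_0$ is the entire content of this proof, and you never construct it. Moreover, the scaffolding you do propose cannot work, for two reasons.

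First, your three $(1,1)$-relations $u_0v_5=u_1v_4$, $u_0v_6=u_2v_4$, $u_0v_7=u_3v_4$ are degenerate. At a point of $B$ with $u_0=0$ they become $u_1v_4=u_2v_4=u_3v_4=0$, which is the single condition $v_4=0$. So they do not form a regular sequence: their zero locus contains the $8$-dimensional set $\{u_0=v_4=0\}$. After intersecting with any $(1,2)$-divisor, your $Z_0$ therefore has a component of dimension at least $7$ lying over $\{u_0=0\}$. Hence $Z_0$ is not an integral complete intersection of dimension $6$, is not a member of the family in the statement, and Proposition~\ref{prop:specialization}\ref{specialization:general} cannot be applied to it.

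Second, tying all three auxiliary variables to the same coordinate $v_4$ means only $v_4$ can carry coefficients of degree $>1$ in $x,y,z$. But \eqref{eq:affine} needs high-degree coefficients on two different coordinates: $xyV^2$ and the $W$-terms $2h_1wW+h_2^2W^2$. Your rewriting of $xyv^2$ as $u_1v_6v_3$ is wrong. With $v_6=yv_4$ this monomial is $xy\,v_3v_4$, not $xy\,v_3^2$. It only works if $V=v_4$, and then $W$ sits among $v_0,\dots,v_3$, whose coefficients have degree at most $1$.

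The missing idea is how the paper makes the degrees fit. It starts from the quartic \eqref{asidvnairndasdfwe}, where $h_2=x-y$. It replaces $w,t,u,v$ by $(x-y)v_1,\dots,(x-y)v_4$ and divides by $(x-y)^2$. This makes the $W^2$-coefficient equal to $1$ and the $wW$-coefficient equal to $2\bigl(1+y-z^2+\tfrac{(1+y)^2}{x-y}\bigr)$.

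The auxiliary variables are then attached to two different coordinates:
\begin{itemize}
\item $v_5=xv_4$ absorbs $xyv_4^2$;
\item $v_6=zv_0$ and $v_7=\tfrac{1+y}{x-y}v_0$ absorb the $wW$-term; the latter is a genuinely rational substitution.
\end{itemize}
This gives the relations $u_0v_5=u_1v_4$, $u_0v_6=u_3v_0$, $(u_1-u_2)v_7=(u_0+u_2)v_0$. That the resulting $Z$ is an integral complete intersection is then checked in \verb|Macaulay2|.

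Once such a $Z_0$ exists, the rest of your argument is fine. The extra boundary $\{v_4=0\}$ is unnecessary, though: $Z_0$ already maps to $B$ with generic fiber $Q$, so the paper applies \cite[Theorem~39]{Sch} directly.
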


\begin{proof}
We use the same quartic \eqref{asidvnairndasdfwe} as before. To find a model as a complete intersection in $\PP^3_\CC\times \PP^7_\CC$, we add three extra variables and rewrite the equation so that all coefficients have degree at most $1$
in $x,y,z$. 

Replacing $w,t,u,v$ by 
$(x-y)v_1,\ldots, (x-y)v_4$ and dividing by $(x-y)^2$, we get
$$
v_1^2+1-xv_2^2-yv_3^2+xyv_4^2
+2\bigl(1+y-z^2+\frac{(1+y)^2}{x-y}\bigr)v_1=0.
$$
Defining $v_5=xv_4$ and $v_6=z$, we may rewrite $xyv_4^2$ as $yv_4v_5$ and $z^2v_1$ as $zv_1v_6$ respectively.
The fraction can be taken care of by defining $v_7=(1+y)/(x-y)$, so that it equals 
$(1+y)v_7$. With these definitions, the equation becomes
$$
v_1^2+1-xv_2^2-yv_3^2+yv_4v_5
+2v_1\bigl((1+y)(1+v_7)-zv_6\bigr)=0.
$$
To homogenize this to $\PP^3_\CC\times \PP^7_\CC$,  use $u_0,u_1,u_2,u_3$ as homogeneous coordinates on 
$\PP^3_\CC$, with $x=u_1/u_0$, $y=u_2/u_0$, $z=u_3/u_0$, and
$v_0,\ldots,v_7$ on $\PP^7_\CC$ so that the above equations take place in the chart
$u_0=v_0=1$. This gives a subscheme $Z\subset\PP^3_\CC\times\PP^7_\CC$
defined by the three equations of bidegree $(1,1)$,
$$
u_0v_5=u_1v_4,\quad
u_0v_6=u_3v_0,\quad
(u_1-u_2)v_7=(u_0+u_2)v_0,
$$
and one of bidegree $(1,2)$,
$$
u_0(v_1^2+v_0^2)-u_1v_2^2-u_2v_3^2+u_2v_4v_5
+2v_1\bigl((u_0+u_2)(v_0+v_7)-u_3v_6\bigr)=0.
$$
A calculation in \verb|Macaulay2| checks that $Z$ is an integral
complete intersection.

Over $K$, we may solve the three linear equations for $v_5,v_6,v_7$, so $Z$ is birational to the quartic \eqref{asidvnairndasdfwe}.
Moreover, doing the above coordinate changes in reverse, we see that the generic fiber of $Z\to \PP^3_\CC$
is the quadric $Q$. The specialization argument now works as before, applying \cite[Theorem~39]{Sch} to the universal family of integral complete intersections of these bidegrees.
\end{proof}


%
%
%

\begin{proposition}\label{prop:fourquadrics}
A very general complete intersection of four quadrics in $\PP^{10}_\CC$
admits no decomposition of the diagonal.\end{proposition}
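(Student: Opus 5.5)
The plan is to run the same specialization argument as in Propositions~\ref{prop:223} and~\ref{prop:incidenceCI}. I would build an integral complete intersection $Z$ of four quadrics in $\PP^{10}_\CC$ that is birational to the quartic \eqref{asidvnairndasdfwe} of Example~\ref{ainbiaedrnfbnsdcn}. It should come with a rational projection $Z\dashrightarrow B=\PP^3_\CC$, onto coordinates $(x_0:x_1:x_2:x_3)$, whose generic fiber is the quadric $Q$. Then $\CC(Z)=K(Q)$, and by Propositions~\ref{prop:criterion} and~\ref{prop:unramified} the class $\gamma$ is a nonzero unramified class on $Z$. The conclusion would follow from Proposition~\ref{prop:specialization}\,\ref{specialization:general} applied to the universal family of integral complete intersections of four quadrics in $\PP^{10}_\CC$.

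\emph{Constructing $Z$.} Starting from the seven affine variables $x,y,z,w,t,u,v$, I introduce three auxiliary variables, each defined by a quadratic relation. The aim is that, after substitution, the quartic \eqref{asidvnairndasdfwe} becomes a polynomial of degree $\le 2$. The terms to be lowered are the degree~4 part $xyv^2+2(y-x)z^2w$ and the cubic term $xyw$. A first attempt is $r=xy$, which turns $xyw$ into $rw$ but still leaves $rv^2$ cubic. One further variable must absorb $v^2$ (or $xv$, with a companion for $yv$), and one must absorb $(y-x)z^2w$. In this absorption the substitutions $s=(y-x)w$ or $p=z^2$ of Proposition~\ref{prop:223} have to be merged so that only three new variables are used in total.

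If this count cannot be met directly, I would first rescale as in the proof of Proposition~\ref{prop:incidenceCI}. There, replacing $w,t,u,v$ by multiples of $(x-y)$ removed the quartic terms at the cost of a fraction. The resulting $\PP^3_\CC\times\PP^7_\CC$ model has three bilinear equations and one $(1,2)$ equation. I would then look for a linear projection or section relating that model to four quadrics in $\PP^{10}_\CC$. Once the equations are found, I homogenize them with $x_0$, verify in \verb|Macaulay2| that $Z$ is an integral complete intersection, and solve the three auxiliary equations on $x_0\neq0$ to recover \eqref{asidvnairndasdfwe}.

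\emph{Resolution and restrictions.} Let $\Gamma\subset Z\times B$ be the closure of the graph of the projection. Exactly as in Proposition~\ref{prop:223}, I write down an explicit $K$-linear embedding $\PP^4_K\to\PP^{10}_K$ identifying $Q$ with $\Gamma_K$. I then check that the boundary $Z\cap\{x_0=0\}$ pulls back to a divisor $D$ whose generic fiber is $E_K=Q\cap\{W=0\}$; possibly there are several such components, each of which must be checked. Over an open set $B^\circ\subset B$ where $\Gamma\to B$ and $D\to B$ are smooth, I take a log resolution $\tau\colon\widetilde Z\to Z$ that is an isomorphism over $U=\{x_0\neq0\}\cap(\text{preimage of }B^\circ)$. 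The only boundary component dominating $B$ then has generic fiber $E_K$, and $\gamma$ restricts to zero on it by Lemma~\ref{aslviunasriindn}. On all other boundary components $\gamma$ restricts to zero by \cite[Proposition~28]{Sch}, and Proposition~\ref{prop:specialization} finishes the proof.

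The main obstacle is the first step: fitting the quartic into exactly four quadrics, that is, only three auxiliary variables. The substitutions available so far use too many variables or leave cubic terms. Even when an algebraic rewriting is found, it must also avoid introducing extra components of $Z$ at infinity that dominate $B$ and whose generic fiber is not $E_K$. On any such component I would have to check separately that $(x,y)$ dies, by a norm argument as in Lemma~\ref{lem:norms}.
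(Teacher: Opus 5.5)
\textbf{There is a genuine gap, and it sits in your first step.} Everything after it (the $K$-linear embedding $\PP^4_K\to\PP^{10}_K$, the boundary analysis, Proposition~\ref{prop:specialization}) presupposes the special member $Z$, which you never construct; you flag this yourself. It is not merely unfinished: in the shape you describe, such a $Z$ cannot exist.

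Suppose the auxiliary relations are solved over $K$ and are linear in the fiber variables. This is what happens in Proposition~\ref{prop:223}, and it is needed for $\Gamma_K$ to be a linear image of $Q$. Then the three auxiliary quadrics contain the center $\Lambda_6=\{x_0=\cdots=x_3=0\}$ of the projection.
\begin{itemize}
\item \emph{Upper bound.} Over $B$, these three quadrics become linear forms on $\O(-1)\oplus\O^7$, with coefficient degrees $2$ and $1$. The fourth quadric becomes a form whose blocks have degrees $2,1,0$. Hence the discriminant of the residual quadric threefold bundle, namely the bordered $11\times11$ determinant, is a form of degree $8$ on $\PP^3_\CC$.
\item \emph{Lower bound along $x=0$.} Both residue forms $\langle 1,-y,-g_1g_2\rangle$ and $\langle -1,y\rangle$ of $Q$ are anisotropic there. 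For the first this uses $\dd_x\alpha_2=(y,1+by)\neq0$; for the second, that $y$ is not a square. By Springer's theorem, every integral lattice has discriminant valuation $\ge2$ there, for any rescaling of the form.
\item \emph{Lower bound elsewhere.} The same bound $\ge 2$ holds along $y=0$. Along each cubic $\{G_i=0\}$ the valuation is $\ge1$, because $(\bar x,\bar y)\neq0$ there.
\end{itemize}
So any such model needs a discriminant of degree at least $2+2+3+3=10>8$. Counting auxiliary variables more cleverly cannot fix this.

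\textbf{The missing idea is that the paper builds no special member of the four-quadric family at all.}
\begin{itemize}
\item A general $X=\{Q_0=\cdots=Q_3=0\}\subset\PP^{10}_\CC$ contains a $2$-plane $\Lambda$, by \cite[Theorem~2.1]{DM}; the expected dimension of the family of planes is $3\cdot 8-4\cdot 6=0$.
\item Choose coordinates $(v_0,\dots,v_7,z_1,z_2,z_3)$ with $\Lambda=\{v=0\}$. Then $Q_k=N_{4k}(v)+\sum_{i\le 3}z_iN_{ik}(v)$, with $N_{ik}$ linear for $i\le3$ and $N_{4k}$ quadratic.
\item Projection from $\Lambda$ maps $X$ birationally onto the determinantal quintic $\{\det N=0\}\subset\PP^7_\CC$, via left kernels of $N(v)$.
\item The complete intersection $\{N(v)u=0\}\subset\PP^3_\CC\times\PP^7_\CC$ of three $(1,1)$-divisors and one $(1,2)$-divisor maps birationally onto the same quintic, via right kernels. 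This is \cite[Proposition~6]{HPT3}, and a very general pair $(X,\Lambda)$ gives a very general such complete intersection.
\item Proposition~\ref{prop:incidenceCI}, together with the birational invariance of admitting a decomposition of the diagonal among smooth projective varieties, then proves the statement.
\end{itemize}
The quadric bundle structure on $X$ obtained this way is not a linear projection, which is why your framework cannot see it. Your fallback of relating the $\PP^3_\CC\times\PP^7_\CC$ model to four quadrics points in the right direction. What it needs is this birational correspondence between very general members, not a special fiber.
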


\begin{proof}
A general complete intersection of four quadrics
in $\PP^{10}_\CC$ contains a $2$-plane (see e.g., \cite[Theorem~2.1]{DM}). For such a plane $\Lambda\subset \PP^{10}_\CC$, consider a very general complete intersection $X$ containing $\Lambda$.
For a very general such pair, the geometric construction of \cite[Proposition~6]{HPT3} shows that $X$ is birational to a very
general complete intersection of bidegrees $(1,1),(1,1),(1,1),(1,2)$ in $\PP^3_\CC\times\PP^7_\CC$. Hence the result follows from Proposition~\ref{prop:incidenceCI}.
\end{proof}
Combining this with the methods of the paper \cite{NO}, we also improve the irrationality bounds for intersections of quadrics in higher dimension. 
\begin{corollary}\label{cor:manyquadrics}
Let $n\ge3$ and $r\ge4$ be integers with $r\ge n-2$.
A very general complete intersection of $r$ quadrics in
$\PP^{n+r}_\CC$ is not stably rational.
\end{corollary}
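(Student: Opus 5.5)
We will prove the corollary by an induction on the number of quadrics, in the style of \cite{NO}, with Proposition~\ref{prop:fourquadrics} as the base input. The relevant tool is the degeneration method of \cite{NO}: a very general member of a family is not stably rational as soon as some degeneration has a strictly semistable model whose stable birational volume differs from $[\Spec\CC]$. In particular, by \cite[Theorem~3.14]{NO}, it suffices to degenerate to a union of two components $Y_1\cup Y_2$ meeting along $Y_{12}$, with $Y_1,Y_2$ stably irrational. Then the volume $[Y_1]+[Y_2]-[Y_{12}]$ has coefficient of $[\Spec\CC]$ equal to $0$ or $-1$, and so differs from $[\Spec\CC]$.

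The induction goes from $(n,r)$ to $(n+1,r+1)$ and to $(n,r+1)$. The base cases are $r=4$ with $n\in\{3,4,5,6\}$.
\begin{itemize}
\item For $n=6$ this is Proposition~\ref{prop:fourquadrics}.
\item For $n\le 5$ and $r=4$, I would reduce to known results. Intersections of four quadrics of dimension $\le 5$ are covered by \cite{NO} or by earlier work on three quadrics via projection from a line or plane. If that fails, I would use a degeneration like the one below, lowering the dimension by cutting with a hyperplane.
\end{itemize}

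The inductive step passes from $r$ quadrics in $\PP^{n+r}$ to $r+1$ quadrics in $\PP^{n+r+1}$. I would degenerate one new quadric $q$ to a product of two hyperplanes $\ell_1\ell_2$, after first replacing the family by its degeneration to the total space of a suitable double-cover or deformation-to-the-normal-cone model, so that the central fiber is strictly semistable. Concretely, take the family $\{q_1=\dots=q_r=0,\ \ell_1\ell_2=t\,q\}$. Its central fiber is $X_1\cup X_2$, where:
\begin{itemize}
\item each $X_i=\{q_1=\dots=q_r=\ell_i=0\}$ is a very general intersection of $r$ quadrics in $\PP^{n+r}$, stably irrational by induction;
\item the double locus $X_{12}$ lies in $\ell_1=\ell_2=0$.
\end{itemize}
The total space is singular at $\{\ell_1=\ell_2=q=0\}\cap X_0$, so I would blow up $X_1$ (a Weil divisor) to get a strictly semistable model. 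This introduces a component birational to a quadric bundle over $X_{12}\cap\{q=0\}$, which is rational over that base, hence the base matters. The volume becomes a combination of $[X_1],[X_2]$ and terms stably equivalent to lower-dimensional pieces; only the coefficient of $[\Spec\CC]$ matters. The condition $r\ge n-2$ enters to guarantee that the auxiliary pieces (intersections of many quadrics in low dimension) are not a priori contributing $[\Spec\CC]$ in a way that cancels. Alternatively, it guarantees that the very general member can be chosen to contain a linear space of dimension large enough to fiber the variety into quadrics, as in Proposition~\ref{prop:fourquadrics}.

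The main obstacle is this bookkeeping: showing that the total coefficient of $[\Spec\CC]$ is not $1$. I would first try to arrange that all extra strata are stably rational with signs that force the coefficient into $\{0,-1\}$. If that fails, I would instead use the mixed-dimension induction of \cite{NO} directly, where this computation for complete intersections of quadrics is already done and the only missing input was the base case $(6,4)$, now supplied by Proposition~\ref{prop:fourquadrics}.
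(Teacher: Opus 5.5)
\textbf{There is a genuine gap in your main argument; only your closing fallback matches the paper.}

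The step that carries the new content is never constructed. The cases $n\le r+1$ are already \cite[Corollary~7.8]{NO}, which also disposes of your base cases $n\in\{3,4,5\}$, $r=4$. What the corollary adds is the line $n=r+2$, i.e.\ $(n,r)=(7,5),(8,6),\dots$. These can only be reached from $(6,4)$ by a step $(n,r)\to(n+1,r+1)$ that raises the dimension.

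You announce such a step but do not construct it. The degeneration you actually write down, $\{q_1=\dots=q_r=0,\ \ell_1\ell_2=tq\}\subset\PP^{n+r+1}$, has general fiber and central components $X_i$ all of dimension $n$. It is therefore the step $(n,r)\to(n,r+1)$. Starting from $(6,4)$ it only produces $(6,r)$ with $r\ge5$, which already lie in the range $n\le r+1$. So your main line proves nothing beyond Proposition~\ref{prop:fourquadrics} and \cite{NO}.

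For the same reason, your account of the hypothesis $r\ge n-2$ is off. It has nothing to do with auxiliary strata or with containing linear subspaces. It just describes the region $n-r\le2$ that is reachable from the range $n-r\le1$ of \cite{NO} together with the base case $(6,4)$, using a step that preserves $n-r$.

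Your closing fallback is the right idea and is exactly the paper's proof. \cite[Theorem~7.7]{NO} with $d=2$ passes from a very general intersection of $r$ quadrics of dimension $n$ to one of $r+1$ quadrics of dimension $n+1$. Induction from $(6,4)$ along $n=r+2$ then finishes the argument. Stated that way, the extra base cases, the same-dimension step and the volume bookkeeping are all unnecessary.

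Incidentally, your same-dimension step is fine and simpler than you fear. The total space is locally $\ell_1\ell_2=tq$, so blowing up the Weil divisor $X_1$ is a small resolution that adds no new component. The volume is then just $[X_1]+[X_2]-[X_{12}]$.
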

\begin{proof}
The cases $n\le r+1$ follow from \cite[Corollary~7.8]{NO}.
For $n=r+2$, Proposition~\ref{prop:fourquadrics} gives
the case $r=4$. Applying \cite[Theorem~7.7]{NO} with
$d=2$ adds one quadric and increases the dimension by one,
so the result follows by induction. \end{proof}

\begin{remark}Another easy corollary of Proposition \ref{prop:incidenceCI} is that a very general bidegree $(4,5)$-divisor in
$\PP^3_\CC\times\PP^7_\CC$ is not stably rational. This also follows by degenerating the divisor into a union of three $(1,1)$-divisors and one $(1,2)$-divisor, and arguing as in \cite{NO}.\end{remark}

A \verb|Macaulay2| file verifying some of the calculations in this paper is available at
\begin{center}
\url{https://www.mn.uio.no/math/personer/vit/johnco/papers/quartic6.m2}
\end{center}

%
%


\subsection*{AI disclosure}We used ChatGPT-5.6/6 to search for equations, write \verb|Macaulay2| code and check arguments. It in particular contributed with a calculation that showed that for certain explicit choices of $g_1,g_2$, the corresponding degree class 3 is unramified. This led to Proposition~\ref{prop:criterion}, whose proof simplifies and generalizes that calculation. We thank OpenAI for access through its Researcher Access Program.

\subsection*{Acknowledgements}
We thank Jean-Louis Colliot-Th\'el\`ene and Stefan Schreieder for very helpful comments.

\end{document}